\theoremstyle{plain}
\newtheorem{theorem}{Theorem}[section]
\newtheorem{lemma}[theorem]{Lemma}
\theoremstyle{definition}
\newtheorem{definition}[theorem]{Definition}
\newtheorem{remark}[theorem]{Remark}
\newcommand{\Spvek}[2][r]{%
  \gdef\@VORNE{1}
  \left(\hskip-\arraycolsep%
    \begin{array}{#1}\vekSp@lten{#2}\end{array}%
  \hskip-\arraycolsep\right)}
\def\vekSp@lten#1{\xvekSp@lten#1;vekL@stLine;}
\def\vekL@stLine{vekL@stLine}
\def\xvekSp@lten#1;{\def\temp{#1}%
  \ifx\temp\vekL@stLine
  \else
    \ifnum\@VORNE=1\gdef\@VORNE{0}
    \else\@arraycr\fi%
    #1%
    \expandafter\xvekSp@lten
  \fi}
\begin{document}
\title[Spectral bounds for ergodic Jacobi operators]
{Spectral bounds for ergodic Jacobi operators\\}
\author{Burak Hat\.{i}no\u{g}lu}
\address{Department of Mathematics, Michigan State University, East Lansing MI 48829, U.S.A.}
\email{hatinogl@msu.edu}

\subjclass[2020]{47B36, 47B80, 47E05}


\keywords{ergodic Jacobi operators, spectral estimates, logarithmic capacity, Lyapunov exponent, density of states measure}

\begin{abstract}
We consider ergodic Jacobi operators and obtain estimates on the Lebesgue measure and the distance between maximum and minimum points of the spectrum in terms of the Lyapunov exponent. Our proofs are based on results from logarithmic potential theory and their connections with spectral theory of Jacobi operators. 
\end{abstract}
\maketitle

\section{Introduction}\label{Sec1}

The Jacobi operator $J$ acting on the Hilbert space $l^2(\mathbb{N})$ is the self-adjoint operator associated with the infinite Jacobi matrix
\begin{equation*}
 \begin{pmatrix}
b_1 & a_1 & 0 & 0 \\
a_1 & b_2 & a_2 & 0 \\
0  &  a_2 & b_3 & \ddots  \\
0 & 0 & \ddots & \ddots
\end{pmatrix},
\end{equation*}
given by
\begin{equation*}
    (J\psi)_n = a_{n-1}\psi_{n-1} + b_n\psi_n + a_n\psi_{n+1}, 
\end{equation*}
where $a_0 = 0$, $a_n > 0$ and $b_n \in \mathbb{R}$ for any $n \in \mathbb{N}$. Discrete Schr\"{o}dinger operators on the real line give a subclass, where $a_n = 1$ for any $n \in \mathbb{N}$. When the index set is $\mathbb{Z}$ one gets the two-sided Jacobi matrix, but for simplicity of the formulation we will stick with the index set $\mathbb{N}$. 

If $(\Omega,d\mu)$ is a probability measure space and $T : \Omega \rightarrow \Omega$ is an ergodic transformation, then one defines Jacobi parameters for $\omega \in \Omega$ and $n \in \mathbb{N}$ as
\begin{equation*}
    a_n(\omega) = f(T^n\omega) \qquad \text{and} \qquad b_n(\omega) = g(T^n\omega),
\end{equation*}
where $f$ and $g$ are measurable functions from $\Omega$ to $\mathbb{R}$ with $f$ positive, bounded and invertible, and $g$ bounded. Then $\{J_{\omega}\}_{\omega \in \Omega}$ defines an ergodic family of Jacobi operators with parameters $\{a_n(\omega),b_n(\omega)\}_{n \in \mathbb{N}}$.

Ergodic Jacobi, and specifically ergodic Schr\"{o}dinger operators appear in solid state physics and quantum Hall effect, serving various models such as crystals, disordered systems or quasi-crystals, see \cites{CFKS87,DF22} and references therein. Besides appearing in models from mathematical physics, the study of ergodic Jacobi and Schr\"{o}dinger operators have been an active research area, combining ideas from dynamical systems, analysis, potential theory and topology. Moreover, Jacobi operators with stochastic parameters show unusual spectral behavior like dense point spectrum, singular continuous spectrum and Cantor spectrum. For example, the almost Mathieu operator is one of the well-studied ergodic Jacobi operators, where the Jacobi parameters are $a_n = 1$ and $b_n(\omega) = 2\lambda\cos(2\pi(\omega + n\alpha))$ with $\alpha, \omega \in \mathbb{T}:= \mathbb{R} / \mathbb{Z}$ and $\lambda \in \mathbb{R} \setminus \{0\}$. For any $\lambda \neq 0$, the spectrum of the almost Mathieu operator is a Cantor set. This is known as the \textit{Ten Martini Problem} and was solved by Avila and Jitomirskaya \cite{AJ09}. For $|\lambda| = 1$, the spectrum of the almost Mathieu operator is purely singular continuous, which was also proved by Jitomirskaya \cite{J21}.

The spectrum of $J_{\omega}$ is nonrandom as follows: For an ergodic family $\{J_{\omega}\}_{\omega \in \Omega}$, there exists a set $\Sigma$ independent of $\omega$ such that $\sigma(J_{\omega}) = \Sigma$ for almost every $\omega$ with respect to the measure $\mu$ \cite{CFKS87}. Throughout the paper we call this set $\Sigma$ the spectrum of $J_{\omega}$. Same nonrandomness is valid for the absolutely continuous, the singular continuous and the pure point spectra \cite{CFKS87}.

In this paper we consider bounds on the Lebesgue measure and the distance between maximum and minimum points of the spectrum $\Sigma$. Similar bounds were obtained for periodic Jacobi matrices \cite{H24}. Our bounds depend on two fundamental object, the Lyapunov exponent and the density of states measure, both of which are introduced in Section \ref{Sec2}. These bounds are obtained using tools from logarithmic potential theory, which are also introduced in Section \ref{Sec2}.

The paper is organized as follows.
 
Section \ref{Sec2} includes preliminaries required for our results, namely some basics of logarithmic potential theory, and the definitions and some properties of the density of states measure and the Lyapunov exponent for ergodic Jacobi operators.

Section \ref{Sec3} includes proofs of the following results and some remarks on them.
\begin{itemize}
    \item In Theorem \ref{result1}, we consider an almost sure nonrandom upper bound for the Lebesgue measure of the spectrum in terms of the Lyapunov exponent and the off-diagonal parameters of an ergodic Jacobi operator.
    \item Theorem \ref{result2} shows an almost sure nonrandom lower bound on the difference between maximum and minimum energies in terms of the logarithmic energy of the density of states measure, or equivalently the Lyapunov exponent and the off-diagonal parameters of an ergodic Jacobi operator.
    \item In Theorem \ref{result3}, we prove that in the case that the Lyapunov exponent is H\"{o}lder continuous on the spectrum, the difference between maximum and minimum energies is almost surely bounded below by nonrandom numbers depending on the Lyapunov exponent and the off-diagonal parameters of an ergodic Jacobi operator and H\"{o}lder continuity parameters of the Lyapunov exponent.
\end{itemize}

\section{Preliminaries}\label{Sec2}

We obtain our results using estimates on logarithmic capacity, so let's recall some basics of logarithmic potential theory, which can be found e.g. in \cite{R95}.

\begin{definition}
Let $\mu$ be a finite Borel measure, supported on a compact subset of the complex plane. Then \textit{logarithmic potential} of $\mu$ is the function $U^{\mu}:\mathbb{C}\rightarrow(-\infty,\infty]$ defined by 
\begin{equation*}
U^{\mu}(z):=\int\log\frac{1}{|z-\omega|}d\mu(\omega)
\end{equation*}
\end{definition}

\begin{definition}
Let $\mu$ be a finite Borel measure, supported on a compact subset of the complex plane. Its \textit{logarithmic energy} $I(\mu)\in(-\infty,\infty]$ is defined by
\begin{equation*}
I(\mu):=\int U^{\mu}(z)d\mu(z)=\int\int\log\frac{1}{|z-\omega|}d\mu(\omega)d\mu(z)
\end{equation*}
\end{definition}

\begin{definition}
Let $K$ be a compact subset of $\mathbb{C}$ and $M(K)$ be the set of Borel probability measures compactly supported inside $K$. The measure $\mu_{K}\in M(K)$ is called the \textit{equilibrium measure} for $K$ if $$I(\mu_{K})=\displaystyle\inf_{\mu\in M(K)}I(\mu).$$
\end{definition}

Now we are ready to define the logarithmic capacity.

\begin{definition}
The \textit{logarithmic capacity} of a subset $E$ of the complex plane is given by
\begin{equation*}
Cap(E):=\sup_{\mu\in M(E)}\exp(-I(\mu)).
\end{equation*}
In particular if $K$ is compact with equilibrium measure $\mu_{K}$, then $Cap(K)=\exp(-I(\mu_{K}))$.
\end{definition}

We will use following properties of the logarithmic capacity in our results:
\begin{theorem}\label{CapacityResults}\emph{\textbf{(}\cite{R95} \textit{Theorem 5.1.2(a), Corollary 5.2.4, Theorem 5.3.1, Theorem 5.3.2(c)}\textbf{)}}
\begin{itemize}
    \item[(a)] If $K_1 \subset K_2$, then $Cap(K_1) \leq Cap(K_2)$. 
    \item[(b)] If $a\leq b$, then $Cap([a,b]) = (b-a)/4$.
    \item[(c)] Let $K$ be a compact subset of $\mathbb{C}$, and let $T: K \rightarrow \mathbb{C}$ be a map satisfying
    \begin{equation*}
        |T(z) - T(w)| \leq A|z-w|^{\alpha},
    \end{equation*}
    where $z,w \in K$, and $A$ and $\alpha$ are positive constants. Then
    \begin{equation}
        Cap(T(K)) \leq A~Cap(K)^{\alpha}.
    \end{equation}
    \item[(d)] Let $K$ be a compact subset of $\mathbb{R}$. Then
    \begin{equation}
        Cap(K) \geq |K|/4,
    \end{equation}
    where $|K|$ denotes the Lebesgue measure of $K$.
\end{itemize} 
\end{theorem}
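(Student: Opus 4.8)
These are standard facts from logarithmic potential theory, so in the paper one simply cites \cite{R95}; the plan below is how one would prove them directly, treating the four parts in increasing order of difficulty. Part (a) is immediate from the definition: if $K_1\subset K_2$ then $M(K_1)\subset M(K_2)$, so the supremum defining $Cap(K_2)$ runs over a larger family of measures, whence $Cap(K_1)\le Cap(K_2)$. It is also worth noting at the outset that part (d) will follow from (b) and (c), so the genuine content lies in (b), (c), and that reduction.

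For part (b) I would first compute $Cap([-1,1])=1/2$ and then invoke affine invariance. Pushing a measure forward under $z\mapsto\lambda z+c$ multiplies every distance by $|\lambda|$, hence subtracts $\log|\lambda|$ from $I(\mu)$ and multiplies $\exp(-I(\mu))$ by $|\lambda|$ uniformly over $M(K)$, so $Cap(\lambda K+c)=|\lambda|\,Cap(K)$. For the unit interval one identifies the equilibrium measure as the arcsine law $d\omega(t)=dt/(\pi\sqrt{1-t^2})$: the substitution $t=\cos\theta$ turns $U^\omega$ on $[-1,1]$ into an integral that the classical identity $\frac1{2\pi}\int_0^{2\pi}\log|e^{i\phi}-e^{i\theta}|\,d\theta=0$ evaluates to the constant $\log 2$, so $\omega$ is the equilibrium measure, $I(\omega)=\log 2$, and $Cap([-1,1])=e^{-\log 2}=1/2$ (alternatively, via the Chebyshev constant $\lim_n\|2^{1-n}\cos(n\arccos x)\|_{[-1,1]}^{1/n}=1/2$). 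Mapping $[-1,1]$ affinely onto $[a,b]$ with factor $(b-a)/2$ then gives $Cap([a,b])=\frac{b-a}{2}\cdot\frac12=\frac{b-a}{4}$.

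For part (c) the cleanest route is the Fekete--Szeg\H{o} identity $Cap(K)=\lim_{n\to\infty}d_n(K)$ with $d_n(K)=\max\{(\prod_{1\le i<j\le n}|z_i-z_j|)^{2/(n(n-1))}:z_1,\dots,z_n\in K\}$. Since the H\"{o}lder map $T$ is continuous, $T(K)$ is compact; picking $w_1,\dots,w_n\in T(K)$ maximizing $\prod_{i<j}|w_i-w_j|$ and writing $w_i=T(z_i)$ with $z_i\in K$, the hypothesis gives $\prod_{i<j}|w_i-w_j|\le A^{n(n-1)/2}(\prod_{i<j}|z_i-z_j|)^\alpha\le A^{n(n-1)/2}d_n(K)^{\alpha n(n-1)/2}$, i.e.\ $d_n(T(K))\le A\,d_n(K)^\alpha$; letting $n\to\infty$ yields $Cap(T(K))\le A\,Cap(K)^\alpha$. (One could instead pull the equilibrium measure of $T(K)$ back through a Borel right inverse of $T$ and compare the double integrals, but that needs a measurable selection, which the Fekete argument avoids.)

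For part (d), set $\ell=|E|$ (nothing to prove if $\ell=0$) and define $\psi:E\to[0,\ell]$ by $\psi(x)=|E\cap(-\infty,x]|$. For $x'<x$ in $E$ one has $0\le\psi(x)-\psi(x')=|E\cap(x',x]|\le x-x'$, so $\psi$ is $1$-Lipschitz; and $\psi$ is onto $[0,\ell]$, since it is continuous and is constant across every gap of $E$, so a skipped value would force the two endpoints of the corresponding gap of $E$ to receive different $\psi$-values although no mass of $E$ lies between them. Applying part (c) with $T=\psi$, $A=\alpha=1$, and then part (b), $|E|/4=Cap([0,\ell])=Cap(\psi(E))\le Cap(E)$. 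The main obstacle is part (b): (a) is trivial, (c) is a soft distortion estimate, and (d) reduces cleanly to (b) and (c), but (b) genuinely requires identifying the equilibrium measure of an interval and evaluating its logarithmic energy — the computation that fixes the constant $1/4$ propagating through the rest. A secondary subtlety is the surjectivity $\psi(E)=[0,\ell]$ in (d), which must be argued carefully when $E$ is badly disconnected.
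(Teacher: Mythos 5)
Your proposal is correct, and since the paper offers no proof of its own here (it simply cites Ransford's book), the right comparison is with that source: your arguments --- monotonicity from the definition, the arcsine equilibrium measure plus affine scaling for the interval, Fekete points/transfinite diameter for the H\"{o}lder contraction bound, and the $1$-Lipschitz map $x \mapsto |K \cap (-\infty,x]|$ reducing (d) to (b) and (c) --- are exactly the standard proofs given there. Nothing to add.
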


Next, we introduce two fundamental objects related with ergodic Jacobi operators: the \textit{density of states measures} and the \textit{Lyapunov exponent}. These two objects and more on the spectral theory of ergodic Jacobi operators can be found e.g. in \cites{CFKS87,DF22,T00}. 

\begin{definition}
    Let $\{J_{\omega}\}_{\omega \in \Omega}$ be a family of ergodic Jacobi operators defined as in the Introduction section. The \textit{density of states measure} is the measure $dN$ defined by
\begin{equation}
    \int h~dN = \mathbb{E}(\langle \delta_0,h(J_{\omega})\delta_0\rangle)
\end{equation}
for bounded measurable $h$, where $\mathbb{E}(f) = \int f(\omega)~d\mu(\omega).$
\end{definition}
The density of states measure is a probability measure supported on the spectrum.

Given an ergodic family $\{J_{\omega}\}_{\omega \in \Omega}$ of Jacobi operators, the solutions of the eigenvalue problem
\begin{equation*}
   a_{n-1}(\omega)\psi_{n-1} + b_n(\omega)\psi_n + a_n(\omega)\psi_{n+1} = z\psi_n
\end{equation*}
for $n \geq 1$ and $a_0 := 0$ are represented by the transfer matrices as
\begin{equation*}
    \begin{pmatrix}
        \psi_{n+1}\\
        \psi_n
    \end{pmatrix} = A_z^n(\omega) \begin{pmatrix}
        \psi_{1}\\
        \psi_0
    \end{pmatrix},
\end{equation*}
where
\begin{equation}\label{Aznomega}
    A_z^n(\omega)  = A_z(T^{n-1}\omega) A_z(T^{n-2}\omega) \cdots A_z(T\omega) A_z(\omega)
\end{equation}
for $n \in \mathbb{N}$, $\omega \in \Omega$, $z \in \mathbb{C}$ and
\begin{equation}\label{Azomega}
    \displaystyle A_n(\lambda) := \frac{1}{a_n(\omega)}\begin{pmatrix}
\lambda - b_n(\omega) & -a_{n-1}(\omega)\\
a_n(\omega) & 0
\end{pmatrix}.
\end{equation}
Following result introduces the \textit{Lyapunov exponent}.
\begin{theorem}\label{Lyapunov exponent}\emph{\textbf{(}\cite{DF22}, \textit{Proposition 4.4.1}\textbf{)}}
For every $z \in \mathbb{C}$, there is a number $L(z) \in [0,\infty)$, called the Lyapunov exponent satisfying
\begin{align*}
    L(z) &= \inf_{n \geq 1} \frac{1}{n} \mathbb{E}\big(\log||A_z^n(\omega)||\big)\\
    &= \lim_{n \rightarrow \infty} \frac{1}{n} \mathbb{E}\big(\log||A_z^n(\omega)||\big)\\
    &= \lim_{n \rightarrow \infty} \frac{1}{n} \log||A_z^n(\omega)|| 
\end{align*}
for $\mu-a.e.$ $\omega \in \Omega$ with the notations \eqref{Aznomega} and \eqref{Azomega}.
\end{theorem}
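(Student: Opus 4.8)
The plan is to recognize this as the classical construction of the Lyapunov exponent of the transfer‑matrix cocycle via Kingman's subadditive ergodic theorem: I would verify that theorem's hypotheses and then read off the three displayed identities. Fix $z\in\mathbb{C}$ and put $f_n(\omega):=\log\|A_z^n(\omega)\|$. The first step is the cocycle relation: writing $A_z^n(\omega)$ as the ordered product in \eqref{Aznomega} and breaking it after the last $m$ factors gives $A_z^{n+m}(\omega)=A_z^n(T^m\omega)\,A_z^m(\omega)$, so submultiplicativity of the matrix norm yields the subadditivity $f_{n+m}(\omega)\le f_n(T^m\omega)+f_m(\omega)$ for all $n,m\ge 1$ and all $\omega$.

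Next I would establish the quantitative bounds that make Kingman's theorem and Fekete's lemma applicable. Since $g$ is bounded and $f$ is bounded and bounded away from $0$, for fixed $z$ the entries of the one‑step matrix in \eqref{Azomega} are bounded uniformly in $\omega$, so there is a constant $C=C(z)\ge 1$ with $\|A_z(\omega)\|\le C$; hence $\|A_z^n(\omega)\|\le C^n$ and $f_n(\omega)\le n\log C$. In particular $f_1\in L^\infty(\Omega,d\mu)\subseteq L^1(\Omega,d\mu)$, so each $f_n\in L^1$ and $\mathbb{E}(f_n)$ is finite. For the lower bound I would compute $\det A_z(\cdot)=a_{n-1}(\omega)/a_n(\omega)$ directly from \eqref{Azomega}; along \eqref{Aznomega} these determinants telescope, so $|\det A_z^n(\omega)|$ equals a ratio of two values of $f$, and using the $T$‑invariance identity $\mathbb{E}(\log f\circ T^k)=\mathbb{E}(\log f)$ one gets $\mathbb{E}\big(\log|\det A_z^n(\omega)|\big)=0$ for every $n$. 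Combined with the elementary inequality $\|M\|\ge\sqrt{|\det M|}$ for $2\times 2$ matrices (the larger singular value dominates the geometric mean of the two), this gives $\mathbb{E}(f_n)\ge 0$. Since $n\mapsto\mathbb{E}(f_n)$ is subadditive (by $T$‑invariance of $\mu$, $\mathbb{E}(f_n\circ T^m)=\mathbb{E}(f_n)$), Fekete's lemma yields $\lim_n\tfrac1n\mathbb{E}(f_n)=\inf_{n\ge1}\tfrac1n\mathbb{E}(f_n)=:L(z)$, and the two displays just obtained place $L(z)\in[0,\log C]\subseteq[0,\infty)$.

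Finally, Kingman's subadditive ergodic theorem applies to $(f_n)$: $f_1^+\in L^1$ and $\inf_n\tfrac1n\mathbb{E}(f_n)=L(z)>-\infty$, so $\tfrac1n f_n(\omega)$ converges for $\mu$‑a.e.\ $\omega$ to a $T$‑invariant integrable function whose integral equals $\lim_n\tfrac1n\mathbb{E}(f_n)=L(z)$. Because $T$ is ergodic, any $T$‑invariant function is $\mu$‑a.e.\ constant, so that limit is $L(z)$ almost everywhere. This establishes all three identities simultaneously, and $L(z)\in[0,\infty)$ has already been shown.

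I do not expect a genuine obstacle, since this is a standard theorem; the only points that require care are the index bookkeeping when passing between the site labels in \eqref{Azomega} and the ordered product in \eqref{Aznomega} (and the harmless observation that a single boundary factor does not affect the $n\to\infty$ limit), and the verification that the integrability and lower‑boundedness hypotheses of Kingman's theorem really hold — which is precisely where the boundedness of $g$ and the boundedness and invertibility of $f$ enter.
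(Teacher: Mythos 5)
The paper does not prove this statement; it is quoted verbatim from \cite{DF22}, Proposition 4.4.1, as a known background fact. Your argument is the standard derivation (subadditivity of $\log\|A_z^n(\omega)\|$ from the cocycle relation, Fekete's lemma for the $\inf=\lim$ of the means, Kingman's subadditive ergodic theorem plus ergodicity for the almost-sure constant limit, and the determinant/singular-value bound $\|M\|\ge\sqrt{|\det M|}$ for nonnegativity), and it is correct. The one point to state explicitly rather than wave at: with the paper's half-line convention $a_0:=0$, the first factor $A_z^1$ is singular, so $\det A_z^n(\omega)=0$ and the telescoping lower bound fails as literally written; you must run that step with the invertible cocycle (i.e.\ $a_0=f(\omega)>0$, as in \cite{DF22}, or by starting the product at the second factor), after which the single boundary factor is indeed harmless for the $n\to\infty$ limit, as you note.
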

The Lyapunov exponent is subharmonic on the complex plane. It is harmonic and positive on $\mathbb{C} \setminus \Sigma$. The famous Thouless formula relates the Lyapunov exponent and the density of states measure.
\begin{theorem}\label{Thouless}\normalfont \textbf{(Thouless Formula)} \emph{\textbf{(}\cite{S07}, \textit{Theorem 7.1(f)}\textbf{)}}
    For every $z \in \mathbb{C}$, we have
    \begin{equation}\label{Thoulessformulaequation}
        L(z) = \int \log|E-z|~dN(E) - \log(A),
    \end{equation}
    where $A := \lim_{n \rightarrow \infty}(a_1(\omega)\cdots a_n(\omega))^{1/n}$ for $\mu-a.e.$ $\omega \in \Omega$.
\end{theorem}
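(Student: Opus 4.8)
The plan is to compare the exponential growth rate of the transfer-matrix norm $\|A_z^n(\omega)\|$ with that of the characteristic polynomial of the truncation $J_{[1,n]}(\omega)$ of $J_\omega$, and then to recognize the latter as a logarithmic potential of the eigenvalue-counting measure, which converges weakly to $dN$. Let $p_k(\cdot,\omega)$ be the orthonormal polynomials attached to the Jacobi parameters of $J_\omega$, i.e.\ $p_0\equiv1$, $p_{-1}\equiv0$ and $a_{k+1}(\omega)p_{k+1}(z)=(z-b_{k+1}(\omega))p_k(z)-a_k(\omega)p_{k-1}(z)$, so that the monic characteristic polynomial of $J_{[1,n]}(\omega)$ is
\begin{equation*}
D_n(z,\omega):=\det\!\big(z-J_{[1,n]}(\omega)\big)=a_1(\omega)\cdots a_n(\omega)\,p_n(z,\omega).
\end{equation*}
Because $a_0=0$, the rightmost factor in \eqref{Aznomega} has vanishing second column by \eqref{Azomega}, and hence so does $A_z^n(\omega)$; the recursion identifies its first column with $(p_n(z,\omega),p_{n-1}(z,\omega))^{\top}$, so that $\|A_z^n(\omega)\|=\big(|p_n(z,\omega)|^2+|p_{n-1}(z,\omega)|^2\big)^{1/2}$.

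I would first prove \eqref{Thoulessformulaequation} for $|z|$ large. Set $R:=\|g\|_\infty+2\|f\|_\infty+1$, so that for $\mu$-a.e.\ $\omega$ the set $\Sigma$ and all zeros of every $D_n(\cdot,\omega)$ lie in $[1-R,R-1]$. For $|z|>R$ the function $E\mapsto\log|z-E|$ is bounded and continuous on a fixed compact set carrying the normalized eigenvalue-counting measures $\kappa_{n,\omega}:=\frac{1}{n}\sum_{E\in\sigma(J_{[1,n]}(\omega))}\delta_E$ and the measure $dN$; so, using the standard fact that $\kappa_{n,\omega}\to dN$ weakly for $\mu$-a.e.\ $\omega$ (a consequence of the ergodicity of $\mu$ and the covariance of the family $\{J_\omega\}$, after controlling the $O(1/n)$ boundary difference between $\operatorname{Tr}h(J_{[1,n]}(\omega))$ and $\sum_k\langle\delta_k,h(J_\omega)\delta_k\rangle$ for polynomial $h$), one gets
\begin{equation*}
\frac{1}{n}\log|D_n(z,\omega)|=\int\log|z-E|\,d\kappa_{n,\omega}(E)\ \longrightarrow\ \int\log|z-E|\,dN(E).
\end{equation*}
Birkhoff's ergodic theorem applied to the bounded function $\log f\in L^1(\Omega,d\mu)$ gives $\frac{1}{n}\sum_{k=1}^n\log a_k(\omega)\to\mathbb{E}(\log f)=\log A$ for a.e.\ $\omega$. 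Since $\log|p_n(z,\omega)|=\log|D_n(z,\omega)|-\sum_{k=1}^n\log a_k(\omega)$, the two limits combine to give $\frac{1}{n}\log|p_n(z,\omega)|\to\int\log|z-E|\,dN(E)-\log A=:\ell(z)$, and the same argument with $n-1$ in place of $n$ shows $\frac{1}{n}\log|p_{n-1}(z,\omega)|\to\ell(z)$ as well. Squeezing $|p_n|\le(|p_n|^2+|p_{n-1}|^2)^{1/2}\le\sqrt{2}\,\max(|p_n|,|p_{n-1}|)$ then gives $\frac{1}{n}\log\|A_z^n(\omega)\|\to\ell(z)$, which, compared with the last limit in Theorem~\ref{Lyapunov exponent}, is precisely \eqref{Thoulessformulaequation} for $|z|>R$.

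It remains to remove the restriction. On the connected open set $\mathbb{C}\setminus\Sigma$ both sides of \eqref{Thoulessformulaequation} are harmonic — the left side because $L$ is harmonic off $\Sigma$, the right side because $z\mapsto\int\log|z-E|\,dN(E)=-U^{dN}(z)$ is harmonic off $\operatorname{supp}(dN)\subseteq\Sigma$ — and they coincide on the nonempty open set $\{|z|>R\}$, so by the identity principle for harmonic functions they coincide throughout $\mathbb{C}\setminus\Sigma$. Finally, both sides are subharmonic on all of $\mathbb{C}$ ($L$ by the remark after Theorem~\ref{Lyapunov exponent}, and $-U^{dN}-\log A$ because $U^{dN}$ is superharmonic, being the potential of a positive measure), and they agree off $\Sigma$ and hence Lebesgue-almost everywhere on $\mathbb{C}$, since $\Sigma\subset\mathbb{R}$ is planar-null; as a subharmonic function is recovered from its $L^1_{\mathrm{loc}}$ class through $u(z_0)=\lim_{r\downarrow0}\frac{1}{\pi r^2}\int_{|w-z_0|<r}u\,dA(w)$, the two sides agree everywhere, which is \eqref{Thoulessformulaequation}.

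The step I expect to be the main obstacle is making airtight the passage from $|z|>R$ to arbitrary $z$: one cannot run the weak-convergence argument directly for $z$ in a spectral gap or on $\Sigma$, because the truncations $J_{[1,n]}(\omega)$ may have eigenvalues approaching such a $z$, so $\log|z-\cdot|$ fails to be bounded on $\operatorname{supp}\kappa_{n,\omega}$ and weak convergence alone is insufficient — it is the combination of the harmonic identity principle on $\mathbb{C}\setminus\Sigma$ with the subharmonic $L^1_{\mathrm{loc}}$-extension across $\Sigma$ that completes the argument. A secondary technical point is the almost-sure weak convergence $\kappa_{n,\omega}\to dN$, which requires the bandedness (locality) of $h(J_{[1,n]}(\omega))$ for polynomial $h$ together with the ergodic theorem applied through the covariance relation $S^*J_\omega S=J_{T\omega}$ (valid up to a boundary term).
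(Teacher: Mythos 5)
The paper does not prove this statement; it is quoted verbatim from Simon's work (\cite{S07}, Theorem 7.1(f)), so there is no in-paper argument to compare against. Your proposal is a correct and essentially complete rendition of the standard Avron--Simon/Craig--Simon proof of the Thouless formula: the identification $\|A_z^n(\omega)\|=(|p_n|^2+|p_{n-1}|^2)^{1/2}$ is right (the $a_0=0$ convention of \eqref{Azomega} does kill the second column), the passage $\tfrac1n\log|D_n(z,\omega)|\to -U^{dN}(z)$ for $|z|>R$ via weak convergence of the eigenvalue-counting measures plus Birkhoff for $\tfrac1n\sum\log a_k$ is the standard route, and you correctly isolate the genuinely delicate step — extending from $|z|>R$ to all of $\mathbb{C}$, which cannot be done by weak convergence alone and requires first the identity principle for harmonic functions on the connected set $\mathbb{C}\setminus\Sigma$ and then the fact that two subharmonic functions agreeing off a planar-null set agree everywhere (via the disk-average recovery formula). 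The ingredients you invoke without proof (subharmonicity of $L$, harmonicity of $L$ off $\Sigma$, a.e. weak convergence $\kappa_{n,\omega}\to dN$) are exactly the ones the literature supplies, and the paper itself asserts the first two.

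Two minor points worth tightening. First, the a.e.\ weak limit of $\kappa_{n,\omega}$ is the density of states in the usual sense, i.e.\ $\mathbb{E}\langle\delta_0,h(\tilde J_\omega)\delta_0\rangle$ for the \emph{whole-line} covariant operator; the paper's literal half-line definition $\mathbb{E}\langle\delta_0,h(J_\omega)\delta_0\rangle$ is the averaged spectral measure of $\delta_0$, which is a different measure (already for the free Laplacian), so your proof establishes the formula for the standard $dN$ of \cite{S07} rather than for the measure as literally defined in Section~\ref{Sec2} — a defect of the paper's definition, not of your argument. Second, since $a_0=0$ makes the first transfer matrix singular, one should note that the cocycle limit in Theorem~\ref{Lyapunov exponent} is being applied to exactly this (rank-one-initialized) product, which is what your squeeze computes; this is internally consistent with the paper's conventions, so no gap results.
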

Thouless Formula also relates spectral theory with potential theory as the right hand side of \eqref{Thoulessformulaequation} is the negative of the logarithmic potential of the density of states measure.

\section{Results, proofs and some remarks}\label{Sec3}
In our results, the following lemma will be a main tool that represents the logarithmic capacity of the spectrum of an ergodic family of Jacobi operators in terms of the Lyapunov exponent and the equilibrium measure of the spectrum.
\begin{lemma}\label{SimonLemma}\emph{\textbf{(}\cite{S07} \textit{Equation (1.37)}\textbf{)}}
    Let $\{J_{\omega}\}_{\omega \in \Omega}$ be a family of ergodic Jacobi operators with parameters $\{a_n(\omega),b_n(\omega)\}_n$ and $L$ be its Lyapunov exponent. Then for a.e. $\omega$,
    \begin{equation}
        Cap(\Sigma) = A \exp\Big(\int L(E) d\mu_{\Sigma}(E)\Big),
    \end{equation}
    where $A := \lim_{n \rightarrow \infty}(a_1(\omega)\cdots a_n(\omega))^{1/n}$ and $\mu_{\Sigma}$ is the equilibrium measure of the spectrum.
\end{lemma}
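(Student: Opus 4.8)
The plan is to derive the identity from the Thouless formula (Theorem~\ref{Thouless}) together with the classical fact that the equilibrium potential of a compact set is constant quasi-everywhere on that set. Denote by $U^{N}$ the logarithmic potential of the density of states measure $N$ and set $\gamma := -\log Cap(\Sigma)$. The Thouless formula reads $-U^{N}(z) = L(z) + \log A$ for every $z \in \mathbb{C}$, so, integrating against the equilibrium measure $\mu_{\Sigma}$ and interchanging the order of integration in the symmetric kernel $\log\frac{1}{|E-E'|}$,
\begin{equation*}
\int L\, d\mu_{\Sigma} + \log A \;=\; -\int U^{N}\, d\mu_{\Sigma} \;=\; -\int U^{\mu_{\Sigma}}\, dN .
\end{equation*}
Thus the lemma is equivalent to the statement $\int U^{\mu_{\Sigma}}\, dN = \gamma$, and exponentiating then yields $Cap(\Sigma) = A\exp(\int L\, d\mu_{\Sigma})$.

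To prove $\int U^{\mu_{\Sigma}}\, dN = \gamma$ I would first record that $\Sigma$ is non-polar and, more importantly, that $N$ charges no set of zero logarithmic capacity. This is where the hypothesis on $f$ enters: since $f$ is positive, bounded and bounded away from $0$, the parameters $a_{n}(\omega)$ lie in a fixed compact subinterval of $(0,\infty)$, so $A \in (0,\infty)$. Combining $-U^{N} = L + \log A$ with $L \ge 0$ (Theorem~\ref{Lyapunov exponent}) and integrating against $dN$ gives
\begin{equation*}
I(N) = \int U^{N}\, dN = -\int L\, dN - \log A \le -\log A < \infty ,
\end{equation*}
while $I(N) > -\infty$ because $N$ is compactly supported. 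A probability measure of finite logarithmic energy cannot give positive mass to a polar set (see \cite{R95}); in particular $Cap(\Sigma) > 0$ and $N$ annihilates every Borel set of capacity zero.

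Now Frostman's theorem (see \cite{R95}) gives $U^{\mu_{\Sigma}} \le \gamma$ on all of $\mathbb{C}$ and $U^{\mu_{\Sigma}} = \gamma$ quasi-everywhere on $\Sigma$, i.e.\ everywhere on $\Sigma$ outside a Borel set of capacity zero. Since $N$ is supported on $\Sigma$ and assigns no mass to that exceptional set, $\int U^{\mu_{\Sigma}}\, dN = \gamma$, as wanted. The two-sided control on $U^{\mu_{\Sigma}}$ (bounded above by $\gamma$, and bounded below on the compact set $\Sigma$ since $\mu_{\Sigma}$ is compactly supported), together with the analogous bound $U^{N} \le -\log A$, is also exactly what is needed to justify the Fubini interchange used above.

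I expect the only non-routine ingredients to be the two imported facts from potential theory: Frostman's theorem and the vanishing of finite-energy measures on polar sets. Both are standard and contained in \cite{R95}, so the bulk of the argument is careful bookkeeping with the Thouless formula, the sign condition $L \ge 0$, and one application of Fubini. The single point that deserves a line of care is verifying the hypotheses of Fubini's theorem, which is why I would establish the upper bounds on $U^{N}$ and $U^{\mu_{\Sigma}}$ before performing the interchange.
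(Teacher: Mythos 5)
Your argument is correct, but note that the paper does not actually prove this lemma: it is imported verbatim from Simon's paper (\cite{S07}, Equation (1.37)) as a black box, so there is no internal proof to compare against. What you have written is essentially the standard derivation that lies behind the cited identity, and all the key steps check out. Writing $\gamma = -\log Cap(\Sigma) = I(\mu_{\Sigma})$, the Thouless formula reduces the claim to $\int U^{\mu_{\Sigma}}\,dN = \gamma$; Frostman's theorem gives $U^{\mu_{\Sigma}} = \gamma$ quasi-everywhere on $\Sigma$; and your observation that $I(dN) = -\int L\,dN - \log A \leq -\log A < \infty$ (using $L \geq 0$ from Theorem \ref{Lyapunov exponent} and the fact that Theorem \ref{Thouless} is stated for \emph{every} $z \in \mathbb{C}$, including $z \in \Sigma$) shows $dN$ has finite logarithmic energy, hence charges no polar set and in particular forces $Cap(\Sigma) > 0$ so that $\mu_{\Sigma}$ exists. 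The Fubini interchange is legitimate because the kernel $\log\frac{1}{|z-w|}$ is bounded below on the compact set $\Sigma \times \Sigma$, so Tonelli applies after subtracting that bound. The only cosmetic caveat is that your appeal to invertibility of $f$ to get $A > 0$ is exactly the standing hypothesis of the paper's setup, so nothing extra is being assumed. In short: a correct and complete proof of a statement the paper merely cites.
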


\begin{theorem}\label{result1}
    Let $\{J_{\omega}\}_{\omega \in \Omega}$ be a family of ergodic Jacobi operators with parameters $\{a_n(\omega),b_n(\omega)\}_n$ and $L$ be its Lyapunov exponent. Then for a.e. $\omega$,
    \begin{equation}\label{result1eq}
    |\Sigma| \leq 4 A \exp\Big(\sup_{E \in \Sigma}|L(E)|\Big)
    \end{equation}
    where $A := \lim_{n \rightarrow \infty}(a_1(\omega)\cdots a_n(\omega))^{1/n}$ and $|K|$ denotes the Lebesgue measure of the set $K$.
\end{theorem}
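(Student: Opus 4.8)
The plan is to combine Lemma \ref{SimonLemma} with the comparison between logarithmic capacity and Lebesgue measure for compact real sets from Theorem \ref{CapacityResults}(d). Since $\{J_\omega\}$ is an ergodic family with bounded parameters, the spectrum $\Sigma$ is a compact subset of $\mathbb{R}$, so Theorem \ref{CapacityResults}(d) applies and gives $Cap(\Sigma) \geq |\Sigma|/4$, i.e. $|\Sigma| \leq 4\,Cap(\Sigma)$.

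Next I would substitute the expression for $Cap(\Sigma)$ from Lemma \ref{SimonLemma}, valid for a.e. $\omega$, to obtain
\begin{equation*}
|\Sigma| \leq 4A \exp\Big(\int L(E)\,d\mu_{\Sigma}(E)\Big).
\end{equation*}
Then the final step is to bound the integral: because $\mu_{\Sigma}$ is a probability measure supported on $\Sigma$, we have $\int L(E)\,d\mu_{\Sigma}(E) \leq \sup_{E \in \Sigma} L(E) \leq \sup_{E \in \Sigma} |L(E)|$, and since $\exp$ is monotone increasing this yields \eqref{result1eq}. (One could even drop the absolute value since $L \geq 0$ everywhere, but keeping $|L(E)|$ makes the statement uniform with the later theorems.)

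There is essentially no serious obstacle here; the only points requiring a word of justification are that $\Sigma$ is compact and real so that Theorem \ref{CapacityResults}(d) is applicable, and that the identity of Lemma \ref{SimonLemma} holds on the same almost sure set of $\omega$ on which $A = \lim_{n\to\infty}(a_1(\omega)\cdots a_n(\omega))^{1/n}$ is defined. The argument is a short chain of inequalities, so I would present it in three lines essentially as above.
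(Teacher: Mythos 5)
Your proposal is correct and follows exactly the paper's argument: apply Lemma \ref{SimonLemma}, bound the integral against $\mu_{\Sigma}$ by $\sup_{E\in\Sigma}|L(E)|$ since $\mu_{\Sigma}$ is a probability measure on $\Sigma$, and combine with $|\Sigma|/4 \leq Cap(\Sigma)$ from Theorem \ref{CapacityResults}(d). No gaps; the observation that one could use $\sup L$ instead of $\sup|L|$ by nonnegativity of $L$ is a fine aside.
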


\begin{proof}
    Using Lemma \ref{SimonLemma} and the fact that $\mu_{\Sigma}$ is a probability measure supported on $\Sigma$, we get
    \begin{equation*}
        Cap(\Sigma) \leq A \exp\Big(\sup_{E \in \Sigma}|L(E)|\Big).
    \end{equation*}
    On the other hand, $\Sigma$ is a compact subset of the real line, so by item (d) of Theorem \ref{CapacityResults} we get the desired result as
    \begin{equation*}
        \frac{|\Sigma|}{4} \leq Cap(\Sigma) \leq A \exp\Big(\sup_{E \in \Sigma}|L(E)|\Big).
    \end{equation*}
\end{proof}

\begin{remark}
    The equality is obtained for the discrete Laplacian, i.e. $a_n = 1, b_n = 0$, since $\Sigma = [-2,2]$ and $L(E) = 0$ for $E \in \Sigma$.
\end{remark}

\begin{remark}\label{examples}
    For some important classes of ergodic Jacobi operators, the Lyapunov exponent is zero on the spectrum, so inequality \eqref{result1eq} becomes $$|\Sigma| \leq 4 A.$$ Some of these classes are as follows:

    \begin{enumerate}
    \item \textit{\textbf{subcritical or critical almost Mathieu operator:}} The almost Mathieu operator is one of the best-understood examples of ergodic Schr\"{o}dinger operators, where the potential is $b_n(\omega) = 2\lambda\cos(2\pi(\omega + n\alpha)$ with frequency $\alpha$ and phase $\omega$ in $\mathbb{T}:= \mathbb{R} / \mathbb{Z}$ and coupling constant $\lambda \in \mathbb{R} \setminus \{0\}$. When $0 < |\lambda| < 1$ (subcritical) or $\lambda = \pm 1$ (critical), the Lyapunov exponent is zero everywhere on the spectrum.\\

    \item \textit{\textbf{periodic Schr\"{o}dinger operators:}} By Floquet-Bloch theory the spectrum of a periodic Schr\"{o}dinger operator has band-gap structure and purely absolutely continuous spectrum \cites{Kuc16,T00}. Since the essential closure of the zero set of the Lyapunov exponent is the ac-spectrum \cite{DF22}, Lyapunov exponents for periodic Jacobi operators are zero on the spectrum.\\

    \item \textit{\textbf{analytic quasi-periodic Schr\"{o}dinger operators without supercritical-regime:}} For analytic quasi-periodic Schr\"{o}dinger operators, the potential $b_n(\omega) = f(\alpha n + \omega)$ is generated from the real valued analytic function $f$ defined on $\mathbb{T}$ with phase $\omega \in \mathbb{T}$ and irrational frequency $\alpha \in \mathbb{T}$. Avila obtained spectral characterizations for analytic potentials using complexified Lyapunov exponents \cite{Avi15}. He considers the Lyapunov exponent with 
    \begin{equation*}
        A_z^n(\omega+i\epsilon) = \begin{pmatrix}
        z-b_n(\omega+i\epsilon) & -1\\
        1 & 0 \end{pmatrix},
    \end{equation*}
which is obtained by complexifying the phase $\omega$ using analyticity of the potential $f$. We denote this complexified Lyapunov exponent by $L_{\epsilon}(z)$. Then the spectrum with an analytic potential decomposes into three mutually disjoint sets as follows: 
\begin{itemize}
    \item The energy $E$ is \textit{subcritical} if $L_{\epsilon}(E)$ is zero in a neighborhood of $\epsilon = 0$.
    \item The energy $E$ is \textit{critical} if $L_{\epsilon}(E)$ is zero at $\epsilon = 0$, but $E$ is not subcritical.
    \item The energy $E$ is \textit{supercritical} if $L_{\epsilon}(E)$ is positive at $\epsilon = 0$.
\end{itemize}

According to this classification of energies, if the supercritical regime is empty, then the Lyapunov exponent is zero everywhere on the spectrum.\\\

\item \textit{\textbf{strictly ergodic subshift operators:}} This class of ergodic Schr\"{o}dinger operators are given by strictly ergodic subshifts on finite alphabets \cites{Dam07,S07}. It is expected that the majority of these operators have purely singular continuous spectrum as a Cantor set of zero Lebesgue measure and zero Lyapunov exponent on the spectrum \cite{Dam07}. 
\end{enumerate}
\end{remark}

\begin{theorem}\label{result2}
  Let $\{J_{\omega}\}_{\omega \in \Omega}$ be a family of ergodic Jacobi operators with Jacobi parameters $\{a_n(\omega), b_n(\omega)\}_n$. Also let $\lambda_m := \min_{E \in \Sigma}E$ and $\lambda_M := \max_{E \in \Sigma}E$, and $I(\mu)$ denote the logarithmic energy of the measure $\mu$. Then 
    \begin{equation}\label{result2corollary}
        \lambda_M - \lambda_m \geq 4 \exp(-I(dN)),
    \end{equation}
    where $dN$ is the density of states measure of $J$. Equivalently we have
    \begin{equation}\label{result2ineqLyapunov}
         \lambda_M - \lambda_m \geq 4 A \exp\Big(\int L(E) ~dN(E)\Big),
    \end{equation}
    where $A := \lim_{n \rightarrow \infty}(a_1(\omega)\cdots a_n(\omega))^{1/n}$ for a.e. $\omega$ and $L$ is the Lyapunov exponent of $J_{\omega}$.
\end{theorem}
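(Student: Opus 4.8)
The plan is to bound $\lambda_M-\lambda_m$ from below by $4\,Cap(\Sigma)$ using only the elementary properties of logarithmic capacity quoted in Section \ref{Sec2}, and then to bound $Cap(\Sigma)$ from below by $\exp(-I(dN))$ by feeding the density of states measure into the variational characterization of capacity. First I would note that $\Sigma$ is a compact subset of $\mathbb{R}$ contained in the interval $[\lambda_m,\lambda_M]$, by the very definition of $\lambda_m$ and $\lambda_M$. Hence by monotonicity of capacity (Theorem \ref{CapacityResults}(a)) together with the value of the capacity of an interval (Theorem \ref{CapacityResults}(b)),
\begin{equation*}
Cap(\Sigma) \leq Cap([\lambda_m,\lambda_M]) = \frac{\lambda_M-\lambda_m}{4},
\end{equation*}
so that $\lambda_M - \lambda_m \geq 4\,Cap(\Sigma)$.

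Next, since $dN$ is a Borel probability measure supported on the compact set $\Sigma$, it is an admissible competitor, i.e.\ $dN \in M(\Sigma)$, so straight from the definition $Cap(\Sigma)=\sup_{\mu\in M(\Sigma)}\exp(-I(\mu))$ we get $Cap(\Sigma)\geq\exp(-I(dN))$. (Equivalently, $\exp(-I(dN))\leq\exp(-I(\mu_\Sigma))=Cap(\Sigma)$ because the equilibrium measure minimizes the logarithmic energy.) Combining this with the previous display gives \eqref{result2corollary}.

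To see that \eqref{result2corollary} and \eqref{result2ineqLyapunov} are the same statement, I would invoke the Thouless formula (Theorem \ref{Thouless}), which in potential-theoretic notation reads $L(E) = -U^{dN}(E) - \log A$ for every $E\in\mathbb{C}$. Integrating against $dN$ and using that $dN$ is a probability measure,
\begin{equation*}
\int L(E)\,dN(E) = -\int U^{dN}(E)\,dN(E) - \log A = -I(dN) - \log A,
\end{equation*}
whence $4A\exp\big(\int L(E)\,dN(E)\big) = 4A\,e^{-\log A}\,e^{-I(dN)} = 4\exp(-I(dN))$, so the two right-hand sides coincide.

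I do not expect a substantive obstacle here: the nontrivial content — the link between the spectrum, the Lyapunov exponent and potential theory — is already packaged into the Thouless formula and the capacity estimates of Section \ref{Sec2}, and what remains is bookkeeping. The only points demanding a little care are the sign conventions in the Thouless formula and in the definition of $U^{dN}$, and the (automatic) fact that $dN$ is admissible in the variational problem defining $Cap(\Sigma)$. For completeness one would also remark that $I(dN)\in(-\infty,\infty]$, so the bound is always meaningful and is trivial precisely when $I(dN)=\infty$.
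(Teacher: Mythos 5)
Your argument is correct, and it is leaner than the one in the paper. The two proofs share the same skeleton — bound $Cap(\Sigma)$ below by $\exp(-I(dN))$, bound it above by $Cap([\lambda_m,\lambda_M])=(\lambda_M-\lambda_m)/4$ via Theorem \ref{CapacityResults}(a),(b), and translate between \eqref{result2corollary} and \eqref{result2ineqLyapunov} with the Thouless formula — but they obtain the lower bound differently. The paper starts from Lemma \ref{SimonLemma}, writes $Cap(\Sigma)=A\exp\big(\int L\,d\mu_\Sigma\big)$, substitutes $L=-U^{dN}-\log A$, uses the symmetry $\int U^{dN}\,d\mu_\Sigma=\int U^{\mu_\Sigma}\,dN$, and then invokes the extremal property of the equilibrium measure to pass to $\exp(-I(dN))$; that last inequality is the least transparent step of the paper's chain. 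You instead observe that $dN\in M(\Sigma)$ and read off $Cap(\Sigma)\geq\exp(-I(dN))$ directly from the variational definition of capacity (equivalently, from $I(\mu_\Sigma)\leq I(dN)$), which makes Lemma \ref{SimonLemma} unnecessary for this theorem and renders the key inequality immediate. Your sign bookkeeping in the Thouless computation $\int L\,dN=-I(dN)-\log A$ is right, and your closing remark that the bound degenerates exactly when $I(dN)=\infty$ is a sensible addition (in fact it never does here, since $L$ is bounded on the compact set $\Sigma$, so $I(dN)$ is finite). In short: same destination, same endpoints, but your middle step is the more economical one.
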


\begin{proof}
    Using Lemma \ref{SimonLemma} and the Thouless formula (Theorem \ref{Thouless}) and recalling that $U^{\mu}$ denotes the logarithmic potential of a measure $\mu$ we get
    \begin{align}
        Cap(\Sigma) &= A\exp\Big(\int L(E)~d\mu_{\Sigma}(E)\Big) \\
                    &= A\exp\Big(\int -U^{dN}(E)~d\mu_{\Sigma}(E) - \int\log(A)~d\mu_{\Sigma}(E)\Big) \\
                    &= A\exp\Big(\int -U^{\mu_{\Sigma}}(E)~dN(E) - \log(A)\Big) \\
                    &\geq \exp\Big(\int -U^{dN}(E)~dN(E)\Big) \label{ineq} \\
                    &= \exp(-I(dN)) \label{logEnergy}.
    \end{align}
    The inequality above follows from the definition of the equilibrium measure and the fact that density of states measure $dN$ is a probability measure supported on $\Sigma$. 
    
    Note that the logarithmic energy of the density of states measure $I(dN)$ is given in terms of the Lyapunov exponent by Thouless formula as
    \begin{equation}
        I(dN) = - \int L(E) ~dN(E) - \log A,
    \end{equation}
    so we can replace \eqref{logEnergy} by 
    \begin{equation*}
         A \exp \Big(\int L(E)dN(E)\Big).
    \end{equation*}

    On the other hand logarithmic capacity is monotone and the logarithmic capacity of an interval is the one fourth of the size of that interval (Theorem \ref{CapacityResults} (a),(b)), so we have
    \begin{equation}\label{CapMaxMinEstimate}
        Cap(\Sigma) \leq Cap([\lambda_m,\lambda_M]) = (\lambda_M - \lambda_m)/4
    \end{equation}
    and hence get the desired results.
\end{proof}

\begin{remark}
    The equality for \eqref{result2ineqLyapunov} is obtained for the discrete Laplacian, i.e. $a_n = 1, b_n = 0$, since $\Sigma = [-2,2]$ and $L(E) = 0$ for $E \in \Sigma$.
\end{remark}

\begin{remark}
    If the Lyapunov exponent is zero everywhere on the spectrum, specifically for the classes we discussed in Remark \ref{examples} we have 
    \begin{equation}
        \lambda_M - \lambda_m \geq 4 A.
    \end{equation}
\end{remark}

\begin{remark}
    If the Lyapunov exponent is zero everywhere on $\Sigma$, then we showed that
    \begin{equation}
        |\Sigma| \leq 4A \leq \lambda_M - \lambda_m.
    \end{equation}
    Therefore if the spectrum is a continuum set in this case, then 
    \begin{equation}
        \lambda_M - \lambda_m = |\Sigma| = 4A.
    \end{equation}
\end{remark}

If the Lyapunov exponent is H\"{o}lder continuous, then we obtain another lower estimate for the logarithmic capacity of the spectrum and hence for the difference between maximum and minimum energies in terms of the Lyapunov exponent and H\"{o}lder continuity parameters.

\begin{theorem}\label{result3}
     Let $\{J_{\omega}\}_{\omega \in \Omega}$ be a family of ergodic Jacobi operators with parameters $\{a_n(\omega),b_n(\omega)\}_n$ and $L$ be its Lyapunov exponent. Also let $\lambda_m := \min_{E \in \Sigma}E$ and $\lambda_M := \max_{E \in \Sigma}E$. If $L$ is $\alpha$-H\"{o}lder continuous on the spectrum $\Sigma$ for $0 < \alpha < 1$, i.e.
    \begin{equation}
        |L(z) - L(w)| \leq C|z-w|^{\alpha}
    \end{equation}
    for $z,w \in \Sigma$ and $C > 0$, then
    \begin{equation}\label{MinMaxEstimateHolder}
         \lambda_M - \lambda_m \geq \Big(\frac{|L(\Sigma)|}{C4^{1-\alpha}}\Big)^{1/\alpha}
    \end{equation}
    where $L(\Sigma) := \{L(E) ~|~ E \in \Sigma\}$ and $|K|$ denotes the Lebesgue measure of a set $K$. 
\end{theorem}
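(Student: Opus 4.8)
The plan is to apply the capacity distortion bound under Hölder maps (Theorem \ref{CapacityResults}(c)) directly to the Lyapunov exponent, viewed as a map from the compact set $\Sigma$ into $\mathbb{R}\subset\mathbb{C}$, and then squeeze the resulting capacities between the interval formula and the Lebesgue-measure bound from Theorem \ref{CapacityResults}(b) and (d). No potential-theoretic or spectral input beyond what is already recorded is needed; the whole argument is a three-line chain of inequalities.

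First I would observe that, since $L$ is $\alpha$-Hölder continuous on $\Sigma$, it is in particular continuous there, and since $\Sigma$ is compact the image $L(\Sigma)$ is a compact subset of $\mathbb{R}$. Applying Theorem \ref{CapacityResults}(c) with $T = L$, $K = \Sigma$, constant $A = C$ and exponent $\alpha$ gives $Cap(L(\Sigma)) \le C\,Cap(\Sigma)^{\alpha}$. Next, since $\Sigma \subseteq [\lambda_m,\lambda_M]$, monotonicity of capacity and the interval formula (Theorem \ref{CapacityResults}(a),(b)) give $Cap(\Sigma) \le Cap([\lambda_m,\lambda_M]) = (\lambda_M-\lambda_m)/4$, while the Lebesgue-measure bound for compact real sets (Theorem \ref{CapacityResults}(d)) applied to $L(\Sigma)$ gives $|L(\Sigma)|/4 \le Cap(L(\Sigma))$.

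Chaining these three estimates yields
\[
\frac{|L(\Sigma)|}{4} \le Cap(L(\Sigma)) \le C\,Cap(\Sigma)^{\alpha} \le C\left(\frac{\lambda_M-\lambda_m}{4}\right)^{\alpha} = \frac{C}{4^{\alpha}}\,(\lambda_M-\lambda_m)^{\alpha},
\]
hence $|L(\Sigma)| \le C\,4^{1-\alpha}(\lambda_M-\lambda_m)^{\alpha}$, and taking $\alpha$-th roots (legitimate since both sides are nonnegative and $1/\alpha>0$) gives exactly \eqref{MinMaxEstimateHolder}. The only points worth checking carefully are that Theorem \ref{CapacityResults}(c) indeed covers maps taking values in $\mathbb{R}$ (it is stated for maps into $\mathbb{C}$, so this is immediate) and that the Hölder hypothesis is used with $z,w$ ranging over all of $\Sigma$, which is precisely the assumption of the theorem. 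I do not anticipate any genuine obstacle; all the difficulty is hidden in the cited distortion estimate Theorem \ref{CapacityResults}(c).
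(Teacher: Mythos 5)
Your proposal is correct and is essentially identical to the paper's own proof: both apply Theorem \ref{CapacityResults}(c) to $T=L$ on $K=\Sigma$, bound $Cap(\Sigma)$ by $(\lambda_M-\lambda_m)/4$ via parts (a) and (b), and bound $Cap(L(\Sigma))$ from below by $|L(\Sigma)|/4$ via part (d). No differences worth noting.
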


\begin{proof}
    Using items (a),(b),(c) of Theorem \ref{CapacityResults}, our assumption and the estimate \eqref{CapMaxMinEstimate} we get
    \begin{equation}
        Cap(L(\Sigma)) \leq C (Cap(\Sigma))^{\alpha} \leq C \Big(\frac{\lambda_M - \lambda_m}{4}\Big)^{\alpha}.
    \end{equation}
    On the other hand, the Lyapunov exponent is real-valued and we assumed it to be continuous on the spectrum, so $L(\Sigma)$ is a compact subset of the real line. Therefore by item (d) of Theorem \ref{CapacityResults} we get
    \begin{equation}
        |L(\Sigma)|/4 \leq  Cap(L(\Sigma)) 
    \end{equation}
    and hence the desired result.
\end{proof}

\begin{remark}
    It is known that $Cap(\Sigma) \geq 1$ \cite{DF22}, so \eqref{MinMaxEstimateHolder} is non-trivial if $|L(\Sigma)| \geq 4C$.
\end{remark}

\begin{remark}
    If the image of the spectrum under the Lyapunov exponent is a continuum set, then the measure of $L(\Sigma)$ is the difference between maximum and minimum values of $L$ over $\Sigma$. Therefore inequality \eqref{MinMaxEstimateHolder} becomes
    \begin{equation}
       \lambda_M - \lambda_m \geq \Bigg(\frac{\displaystyle\sup_{E \in \Sigma}|L(E)| - \inf_{E \in \Sigma}|L(E)|}{C4^{1-\alpha}}\Bigg)^{1/\alpha}. 
    \end{equation}
    Moreover if the Lyapunov exponent is zero at some energy, e.g. if the ac-spectrum is non-empty, then from the non-negativity of the Lyapunov exponent, inequality \eqref{MinMaxEstimateHolder} becomes
    \begin{equation}
       \lambda_M - \lambda_m \geq \Bigg(\frac{\sup_{E \in \Sigma}|L(E)|}{C4^{1-\alpha}}\Bigg)^{1/\alpha}.
    \end{equation}
\end{remark}

\bibliographystyle{abbrv}
\bibliography{references}

\end{document}